\newtheorem{theorem}{Theorem}[section]
\newtheorem{corollary}[theorem]{Corollary}
\newtheorem{proposition}[theorem]{Proposition}
\newtheorem{conjecture}[theorem]{Conjecture}
\theoremstyle{definition}
\newtheorem{definition}[theorem]{Definition}
\newtheorem*{acknowledgement}{Acknowledgement}
\newtheorem{example}[theorem]{Example}
\theoremstyle{remark}
\numberwithin{equation}{section}
\numberwithin{figure}{section}
\DeclarePairedDelimiter{\abs}{\lvert}{\rvert}
\DeclarePairedDelimiter{\braket}{\langle}{\rangle}
\newcommand*{\opensymbol}{}
\DeclareMathOperator{\aut}{Aut}
\DeclareMathOperator{\Tr}{Tr}
\DeclareMathOperator{\diag}{diag}
\DeclareMathOperator{\metrics}{Met}
\DeclareMathOperator{\Vol}{Vol}
\newcommand*{\openVol}{\Vol^{\opensymbol}}
\DeclareMathOperator{\RG}{RG}
\newcommand*{\openRG}{\RG^{\opensymbol}}
\newcommand*{\CoverOpenRG}{\widetilde\RG^{\opensymbol}}
\newcommand*{\double}{D}
\newcommand*{\zset}[1]{\underline{#1}}
\newcommand*{\halfedges}{\mathfrak{h}}
\newcommand*{\interiorHalfedges}{\halfedges_{\text{int}}}
\newcommand*{\boundaryHalfedges}{\halfedges_{\partial}}
\newcommand*{\edges}{e}
\newcommand*{\vertices}{v}
\newcommand*{\faces}{f}
\newcommand*{\boundaries}{b}
\newcommand*{\Mbar}{\widebar{\mathcal{M}}}
\newcommand*{\moduli}{\mathcal{M}}
\newcommand*{\openModuli}{\moduli^{\opensymbol}}
\newcommand*{\coverOpenModuli}{\widetilde{\moduli}^{\opensymbol}}
\newcommand*{\torusOpenModuli}{\moduli^{\opensymbol \vec{\gamma}}}
\newcommand*{\compactOpenModuli}{\widebar{\moduli}^{\opensymbol}}
\newcommand*{\graphs}{\mathcal{G}}
\newcommand*{\QQ}{\mathbb{Q}} 
\newcommand*{\RR}{\mathbb{R}} 
\newcommand*{\ZZ}{\mathbb{Z}}
\newcommand*{\cC}{\mathcal{C}}
\newcommand*{\cH}{\mathcal{H}}
\begin{document}

\title{Combinatorial models for moduli spaces of open Riemann surfaces}
\subjclass[2010]{Primary 14H15; Secondary 14N35, 30F50, 32G15, 37K10, 53D30}
\author[B. Safnuk]{Brad Safnuk}
\address{
    Department of Mathematics\\
    Central Michigan University\\
    Mount Pleasant, MI 48859, U.S.A.}
\email{brad.safnuk@cmich.edu}

\begin{abstract}
    We present a simplified formulation of open intersection numbers, as an alternative to the theory initiated by Pandharipande, Solomon and Tessler.
    The relevant moduli spaces consist of Riemann surfaces 
    (either with or without boundary) with only interior marked points.
    These spaces have a combinatorial description using a generalization of ribbon graphs, with a straightforward compactification and corresponding intersection theory.
    Crucially, the generating functions for the two different constructions of open intersection numbers are identical.
    In particular, our construction provides a complete proof of the statement that this generating function is a solution of the MKP hierarchy, satisfies $W^{(3)}$-constraints, and additionally proves in the affirmative the $Q$-grading conjecture for distinguishing contributions from surfaces with different numbers of boundary components, as was previously proposed by the author.
 \end{abstract}

\maketitle

\section{Introduction}

In \cite{Pandharipande:2014fk}, Pandharipande, Solomon and Tessler constructed a rigorous intersection theory on the moduli space of the disk, and proved that the generating function for these numbers obey a number of constraint conditions that are direct analogues of the KdV equation and Virasoro constraints for intersection theory on moduli spaces of closed Riemann surfaces.
They conjectured that the full generating function for descendant integrals on moduli spaces of open Riemann surfaces (i.e. including all genera and number of boundary components) satisfies an integrable system that they termed the open KdV hierarchy, as well as corresponding Virasoro constraints.

In proving the consistency of the proposed Virasoro constraints and open KdV equations, Buryak \cite{Buryak:2014kx} embedded the open KdV equations into the Burgers-KdV hierarchy. He proposed that this larger hierarchy would capture the descendant integrals that included $\psi$-classes corresponding to boundary marked points.
Alexandrov \cite{Alexandrov:2014gfa, Alexandrov:2015kq} then proved that the solution of the Burgers-KdV hierarchy consistent with the open intersection number generating function is given by the Kontsevich-Penner matrix model, when $Q=1$:
\begin{equation*}
    \tau_Q = \det(\Lambda)^Q \cC^{-1}_{\Lambda}
    \int_{\cH_N} d\Phi \exp \left(
        -\Tr \Bigl( \frac{\Phi^3}{3!} - \frac{\Lambda^2\Phi}{2}
        + Q \log \Phi \Bigr)
    \right).
\end{equation*}
He showed that this matrix model is a solution of the MKP hierarchy, and additionally satisfies $W^{(3)}$-constraints.

In \cite{2016arXiv160104049S}, we used these $W^{(3)}$-constraints to derive a topological recursion formula, in the spirit of Chekhov, Eynard, and Orantin \cite{MR2222762, Eynard:2007kx}, which reconstructs the generating function. The recursion formula itself suggested a conjectural refinement to the generating function, by incorporating a grading parameter $Q$, which distinguishes contributions to the generating function from surfaces with different numbers of boundary components.

To be more precise, we let $\openModuli_{(g, b), k, l}$ be the moduli space of Riemann surfaces with boundary, having genus $g$, $b$ boundary components, $k$ marked points on the boundary, and $l$ interior marked points, and $\compactOpenModuli_{(g, b), k, l}$ its compactification by nodal surfaces. Chern classes corresponding to cotangent lines at the interior marked points are denoted $\psi_i$, while the ones coming from boundary marked points are denoted $\phi_i$. Assuming that all such constructions can be made rigorous, we define
\begin{equation*}
    F_{(g, b), n}(t_1, t_2, \ldots) = 
    \sum_{k+l = n} \sum_{\vec{d}, \vec{f}}
    \frac{1}{k! l!}
    \int\limits_{\compactOpenModuli_{(g, b), k, l}}
    \prod_{i=1}^{l} \psi_i^{d_i} T_{d_i}
    \prod_{j=1}^{k} \phi_j^{f_j} S_{f_j},
\end{equation*}
where the natural generating function parameters $\{T_d\}$, $\{S_f\}$ are related to the KP times $\{t_j\}$ by
\begin{align*}
    T_d &= (2d+1)!! t_{2d+1}, \\
    S_f &= 2^{f+1}(f+1)! t_{2f+2}.
\end{align*}
Then in \cite{2016arXiv160104049S} we proposed the following
\begin{conjecture}
    \label{conj:Q-grading}
    \begin{equation*}
        \tau_Q(t_1(\Lambda), t_2(\Lambda), \ldots) =
        \exp\Bigl(\sum_{g, b, n}\hbar^{2g + b - 2}Q^b F_{(g, b), n}\Bigr),
    \end{equation*}
    where 
    \begin{equation*}
        t_k(\Lambda) = \frac{1}{k}\Tr \Lambda^{-k}.
    \end{equation*}
\end{conjecture}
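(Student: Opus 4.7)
My plan is to derive the identity in two steps: first produce a matrix-model representation of the generating function built from the new moduli spaces $\openModuli_{(g,b),n}$ of surfaces with only interior marked points, and then argue that this generating function coincides with the Pandharipande-Solomon-Tessler generating function $F_{(g,b),n}$ appearing on the right-hand side of the conjecture.

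For the first step I would follow the Kontsevich program, adapted to the generalized ribbon graphs announced in the abstract. The cell decomposition presents each open surface as a fatgraph whose faces are of two types: those dual to the interior marked points, carrying prescribed perimeters $p_i$ that couple via the usual combinatorial formula to $\psi_i$, and those dual to the $b$ boundary components of the surface, whose perimeters are free moduli over which one must integrate. Integrating $\prod_i \psi_i^{d_i}$ over a single cell produces the usual Kontsevich product of edge propagators; summing over trivalent fatgraphs and performing the Laplace transform in the perimeters via
\begin{equation*}
    \int_0^\infty p^{2d+1} e^{-p\lambda}\, dp = \frac{(2d+1)!}{\lambda^{2d+2}}
\end{equation*}
yields a matrix integral in which the two face types appear as two different vertex species. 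Specifically, the cubic term $-\Tr(\Phi^3/3!)$ supplies the trivalent graph, the quadratic term $\Tr(\Lambda^2 \Phi)/2$ produces the propagator whose Laplace kernel converts perimeter monomials into traces of $\Lambda^{-k}$, and the logarithmic vertex $-Q \Tr \log \Phi$ generates one marked face per boundary component with weight $Q$. This is precisely the Kontsevich-Penner integrand $\tau_Q$; the factor $Q^b$ emerges from the boundary-face vertices, and integrating out the free boundary perimeters produces the even KP times $t_{2f+2}$.

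The second step — identifying this new generating function with $F_{(g,b),n}$ of the conjecture — is the main obstacle. The difficulty is that Pandharipande, Solomon and Tessler integrate over moduli spaces with additional boundary marked points decorated by $\phi$-classes, whereas the new formulation has only interior marked points and integrates freely over boundary perimeters. I expect this equivalence to follow from a forgetful/pushforward argument: pushing a boundary marked point along the fiber of the map $\compactOpenModuli_{(g,b),k,l} \to \compactOpenModuli_{(g,b),k-1,l}$ converts an insertion $\phi^f$ into an integral along a boundary circle, and iterating across all $k$ boundary marked points reproduces the free boundary-perimeter integration of the new formulation, with the combinatorial factors $2^{f+1}(f+1)!$ implicit in the definition of $S_f$ appearing naturally from the descendant/boundary-length dictionary. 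Once this equivalence is established, Alexandrov's theorem that $\tau_Q$ satisfies the MKP hierarchy and $W^{(3)}$-constraints transfers directly to the geometric generating function, and the $Q$-grading conjecture becomes tautological: each boundary component of the combinatorial model corresponds to exactly one insertion of the $\log \Phi$ vertex, so the power of $Q$ counts boundary components on the nose.
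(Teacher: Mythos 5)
Your first step is, in outline, the route the paper actually takes: the Feynman expansion of the Kontsevich--Penner model is a sum over trivalent ribbon graphs with two vertex species, the $\Tr(\Lambda^{-1}X)^k$ vertices coming from the logarithm are blown up into boundary cycles, each such insertion carries one power of $Q$, so a connected graph with $b$ boundary cycles contributes $Q^b$; matching this against the Laplace transform of the cell-by-cell integral of Kontsevich's form over the open ribbon graph complex gives $\log\tau_Q=\sum_{g,b,n}Q^bF_{(g,b),n}$ for the generating function built from the new moduli spaces. But your sketch omits the parts that carry the actual weight of the argument: (i) the identification $\openModuli_{(g,b),n}\times\RR_+^n\cong\openRG_{(g,b),n}$, which the paper obtains by applying the Bowditch--Epstein cut-locus construction to the doubled surface with its anti-holomorphic involution; (ii) the compactification inside $\Mbar_{g,b+n}\times\RR_{\geq 0}^{b+n}$ and the convergence of the boundary-perimeter integral, which has no exponential damping and converges only because $\sum y_j\leq\sum x_i$, itself a consequence of forbidding vertices of degree one and two; and (iii) the justification for integrating the boundary perimeters as flat coordinates, which the paper supplies by realizing $\openVol_{(g,b),n}(\vec{x};\vec{y})\,d\vec{y}$ as a Duistermaat--Heckman measure for a Hamiltonian torus action on a Mirzakhani-type cover $\moduli^{\vec{\gamma}}_{g,n+2b}$, combined with Do's scaling-limit identification of the Kontsevich form with $\sum\frac{x_i^2}{2}\psi_i+\sum\frac{y_j^2}{2}\psi_{j+n}$.

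The genuine gap is your second step. There is no established forgetful/pushforward map $\compactOpenModuli_{(g,b),k,l}\to\compactOpenModuli_{(g,b),k-1,l}$ that converts a $\phi^f$ insertion at a boundary marked point into an integral over a boundary length: the $\phi$-classes live on a compactification by nodal open surfaces whose construction is not yet complete, and the paper explicitly describes the relationship between its moduli problem and the Pandharipande--Solomon--Tessler one as mysterious and defers it to future work. The paper does not prove the conjecture by a direct geometric comparison of the two intersection theories; it proves the displayed identity for its own definition of $F_{(g,b),n}$ (interior marked points only, boundary perimeters integrated out), and the agreement with the PST generating function is obtained only through the matrix model itself --- both theories produce asymptotic expansions of the same Kontsevich--Penner integral, with the PST side supplied by the results of Alexandrov and Buryak--Tessler. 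If you insist on reading $F_{(g,b),n}$ in the conjecture as the generating function with boundary marked points and $\phi$-classes, your step two as written does not go through; you would need either to reroute through the common matrix model as the paper does, or to supply a comparison theorem that does not currently exist.
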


We note that not all details of the proof of the open KdV conjecture are available at the time of the writing of the present paper.
Notably missing are a rigorous construction of the compactification, and consideration of the boundary behavior of the line bundles corresponding to the boundary marked points.
However, Solomon and Tessler have announced that complete proofs are forthcoming.
Some important details have appeared \cite{Buryak:2015uq, Tessler:2015ys}, including a combinatorial model for the moduli space.
Since, in the intersection theory, the behavior at the boundary plays a crucial role, their formulas for the generating function include nodal ribbon graphs
(i.e. combinatorial models for nodal open Riemann surfaces).
Buryak and Tessler \cite{Buryak:2015uq} have successfully constructed a matrix model whose asymptotic expansion can be expressed as a sum of these nodal open ribbon graphs,
and is a solution of the open KdV hierarchy.
They are also working on extending their work to include $\phi$-classes corresponding to the boundary marked points.

A natural question that arises is what is the relationship between the two different matrix models used to recreate open intersection numbers?
Moreover, is there some geometric understanding that underpins these different models?
It is this question, as well as providing a proof of the $Q$-grading conjecture, that motivated the present work. As a side benefit, we obtain a conceptually simple alternative geometric model for open intersection numbers, which is of interest in its own right.
In particular, we prove directly that the generating function for our intersection theory is given by the Kontsevich-Penner model.
Hence, it is a solution of the modified KP (MKP) hierarchy (c.f. \cite{Alexandrov:2013fj,MR723457}) satisfies $W^{(3)}$-constraints, and allows for a proof of Conjecture~\ref{conj:Q-grading}. 

The main results of the paper are outlined below.
First, we give the Feynman graph expansion of the Kontsevich-Penner model.
This is written as a sum over ribbon graphs with boundary, and explicitly shows that graphs with $b$ boundary components contribute $Q^b$ to the partition function.

Then, we show that these same graphs appear in a combinatorial model for a simplified variant of the moduli space of open Riemann surfaces.
More precisely, we limit ourselves to the case when there are no marked points on the boundary.
If $\openModuli_{(g, b), n}$ is the moduli space of genus $g$ Riemann surfaces with $b$ boundary components and $n$ (internal) marked points, then in Section~\ref{sect:CombinatorialRepresentations} we construct an equivalence of orbifolds
\begin{equation*}
    \openModuli_{(g,b), n} \times \RR_+^{n} \xrightarrow{\sim} \openRG_{(g,b), n},
\end{equation*}
where $\openRG_{(g, b), n}$ is the complex of ribbon graphs with boundary, as defined in Section~\ref{sect:RibbonGraphs}.

The combinatorial model permits an embedding of the open moduli space into the moduli space of closed Riemann surfaces, and a straightforward compactification is obtained by taking the closure of the image in the usual Deligne-Mumford moduli space of nodal curves.
An intersection theory on this moduli space is obtained by considering for any fixed $\vec{x}\in \RR_+^n$, the integral of
\begin{equation*}
    \frac{1}{d!}\left(
        \sum_{i=1}^{n} \frac{x_i^2}{2}\psi_i + \sum_{j=1}^{b}
        \frac{y_j^2}{2}\psi_{j+n}
    \right)^{d} dy_1 \cdots dy_b
\end{equation*}
over a compact subset of $\Mbar_{g, n+b}\times \RR_{\geq 0}^b$ that depends on $\vec{x}$, but naturally arises from our compactification of $\openModuli_{(g,b), n}$.
Moreover, we show that this volume form has an interpretation as a Duistermaat-Heckman measure, reflecting the intrinsic geometry of $\openModuli_{(g, b), n}$.

Using the Weil-Petersson symplectic form, and adapting a scaling limit argument of Do \cite{2010arXiv1010.4126D}, we pull back the scaled sum of $\psi$-classes to the piecewise defined 2-form on $\openRG_{(g,b), n}$ that was first considered by Kontsevich in his proof of the Witten Conjecture \cite{MR1171758}.
By integrating over the ribbon graph complex, we prove that the generating function of open intersection numbers is given by the Kontsevich-Penner model.

The techniques used in this paper are from hyperbolic and symplectic geometry, and it does seem to be the natural setting in which the construction is carried out.
Nevertheless, it would be interesting to obtain an algebro-geometric interpretation for these results.
As well, the relationship between our moduli space and the one being constructed by Solomon and Tessler seems to be quite mysterious, and worth exploring further.
However, such topics are left for future work.

The paper is organized as follows.
In Section~\ref{sect:RibbonGraphs} we define a notion of open ribbon graph (or ribbon graph with boundary) and construct the resulting ribbon graph complex.
In Section~\ref{sect:FeynmanGraphs} we calculate the Feynman graph expansion of the Kontsevich-Penner model, and show that it can be written as a sum over open ribbon graphs.
In Section~\ref{sect:CombinatorialRepresentations} we prove that the ribbon graph complex constructed in Section~\ref{sect:RibbonGraphs} is equivalent to the moduli space of open Riemann surfaces, and, as a consequence, permits for a straightforward compactification.
Finally, in Section~\ref{sect:IntersectionNumbers} we construct an intersection theory on the moduli space, and show that its generating function is given by the Kontsevich-Penner model.

\begin{acknowledgement}
    The author would like to thank Ran Tessler for helpful discussions.
     During the preparation of this paper, the author received support from the National Science Foundation through grant DMS-1308604. 
\end{acknowledgement}

\section{Ribbon graphs with boundary}
\label{sect:RibbonGraphs}
In this section we define a notion of ribbon graph appropriate for modeling surfaces with boundary.
By \emph{ribbon graph} (either with or without boundary) we mean a graph embedded in a compact surface such that its complement is a disjoint collection of disks, and with the boundary of the surface contained in the image of the graph.
We further require the interior of each edge to be either contained in the boundary or disjoint from the boundary.
As well, we disallow graphs with vertices of degree one or two.
The \emph{boundary} of the ribbon graph is the collection of edges and vertices contained in the boundary of the surface.
We will use the terms open or closed ribbon graphs to refer to graphs with or without boundary, respectively. 

The path of edges surrounding a disk in the graph complement of the surface is a \emph{face cycle}, while the path of edges around a boundary component is a \emph{boundary cycle}.
Note that in the literature describing closed ribbon graphs, what we call a face cycle is usually called a boundary cycle.
We have changed standard notation to avoid confusing the different notions of boundary.
The \emph{type} of a ribbon graph is the triple of integers $((g, b), n)$, where $g$ is the genus of the underlying surface, $b$ is the number of boundary components of the surface (equivalently, the number of boundary cycles on the graph), and $n$ is the number of disks in the graph complement (equivalently, the number of face cycles).
A ribbon graph is open if and only if $b > 0$.

An equivalent, purely combinatorial definition of a ribbon graph is given as follows.

\begin{definition}
    \label{defn:RibbonGraph}
    A \emph{ribbon graph} is a collection of data
    $\Gamma = (\halfedges, \sigma_0, \sigma_1, \sigma_2, B)$, where
    \begin{enumerate}
        \item $\halfedges$ is a finite set (the set of directed edges, or equivalently half-edges of the graph).
        \item $\sigma_i: \halfedges \rightarrow \halfedges$ are permutations.
        \item $\sigma_1$ is a fixed-point free involution.
        \item $\sigma_2 =\sigma_0^{-1}\circ\sigma_1$, hence is usually omitted from the notation.
        \item $B \subset \halfedges$ is the set of boundary half-edges, satisfying the conditions:
            \begin{itemize}
                \item[(i)] if $x\in B$ then $\sigma_2(x)\in B$, and
                \item[(ii)]  if $x\in B$ then $\sigma_1(x) \notin B$.
            \end{itemize}
        \item No cycles of $\sigma_0$ are of length 1 or 2.
    \end{enumerate}
\end{definition}

    A \emph{connected} ribbon graph satisfies the additional constraint that the group generated by $\braket{\sigma_0, \sigma_1}$ acts transitively on $\halfedges$.
    A graph satisfying conditions (1)--(5), but having at least one vertex of degree less than three will be called \emph{unreduced}.

    For any ribbon graph $\Gamma$, we can identify the cycles of $\sigma_0$ with the vertices $\vertices(\Gamma)$,
    the cycles of $\sigma_1$ with the edges $\edges(\Gamma)$,
    and the cycles of $\sigma_2$ with the disjoint union of the faces $\faces(\Gamma)$ and boundaries $\boundaries(\Gamma)$.
    Boundary cycles are identified by having their half-edges in $B$,
    while face cycles have their half-edges in $F = \halfedges \setminus B$.
        
    For a set $X$, a ribbon graph $\Gamma = (\halfedges, \sigma_0, \sigma_1, B)$ is called $X$-colored if there is a map from the set of faces of $\Gamma$ to $X$.
    Equivalently, this is a map
    $c: F \rightarrow X$ satisfying
    $c\circ\sigma_2 = c$.
    When $X = \{1, \ldots, n\}$, we call the graph $n$-colored.
    A \emph{face-marked} ribbon graph is an $n$-colored ribbon graph with a bijective coloring.

A \emph{metric} ribbon graph has the extra data of a positive length assigned to each edge. Hence, any point in $\RR_+^{\abs{\edges(\Gamma)}}$ defines a metric on $\Gamma$. However, the automorphism group of $\Gamma$ acts on
$\RR_+^{\abs{\edges(\Gamma)}}$, making the set of all metrics on $\Gamma$ into an orbifold:
\begin{equation*}
    \metrics(\Gamma) = \RR_+^{\abs{\edges(\Gamma)}} / \aut(\Gamma).
\end{equation*}

If we let $\graphs_{(g,b), n}$ be the set of all connected, face-marked ribbon graphs of type $((g,b), n)$, then the \emph{open ribbon graph complex} is the set
\begin{equation*}
    \openRG_{(g, b), n} = \bigsqcup_{\Gamma \in\graphs_{(g, b), n}} \metrics(\Gamma).
\end{equation*}
Edge contraction provides a natural topology on the complex, making it into a smooth orbifold \cite{MR1734132, sleator1988rotation}.
Note that $\openRG_{(g, 0), n} = \RG_{g,n}$ is the complex of closed ribbon graphs appearing previously in the literature.

In order for the constructions below to make sense, for any ribbon graph we must first choose a labeling on the boundary cycles.
This produces a $b!$-fold orbifold covering
\begin{equation*}
    \CoverOpenRG_{(g,b), n} \rightarrow \openRG_{(g,b), n}.
\end{equation*}
However, the end result is independent of choice of labeling, hence can be understood as a statement about $\openRG_{(g, b), n}$.

Following Kontsevich \cite{MR1171758}, we construct $n+b$ 2-forms on the ribbon graph complex
(one for each face and boundary)
by choosing a distinguished edge for every face and boundary cycle,
which upgrades the cyclic ordering of the edges around a cycle to a total ordering.
We use the notation $\ell_1^{[k]}, \ldots, \ell_{m_k}^{[k]}$ to indicate the lengths of the edges appearing in order around cycle $k$.
The face cycles have indices $k \leq n$, while the boundary cycles correspond with $k > n$.
We define
\begin{equation*}
 \omega_k = \sum_{i=1}^{m_k - 1}\sum_{j=i+1}^{m_k}d\ell_i^{[k]}\wedge d\ell_j^{[k]},
\end{equation*}
then set
\begin{equation*}
\Omega = \frac{1}{2} \sum_{k=1}^{n+b} \omega_k.
\end{equation*}
  
Note that $\Omega$ is not invariant under changes in the choices of total ordering at each boundary.
However, if $\Omega'$ is a Kontsevich 2-form constructed from a different choice of total ordering at each cycle,
then the difference can be calculated to be
\begin{equation*}
\Omega - \Omega' = \sum_{k=1}^{n+b} a_k \wedge dp_k,
\end{equation*}
where $a_k$ are one forms and $p_k = \ell_1^{[k]} + \cdots + \ell_{m_k}^{[k]}$ is the perimeter of cycle $k$.
If we let $p: \CoverOpenRG_{(g,b), n} \rightarrow \RR_+^{n+b}$ be the map sending a metric ribbon graph to its tuple of cycle perimeters, then we denote $\CoverOpenRG_{(g, b), n}(\vec{x}; \vec{y}) = p^{-1}(x_1, \ldots, x_n, y_1, \ldots, y_b)$.
Hence
$\left. \Omega\right|_{\CoverOpenRG_{(g,b), n}(\vec{x}; \vec{y})}$
is well-defined.
Moreover, Kontsevich \cite{MR1171758} proved that it is non-degenerate when restricted to cells corresponding to graphs with no even-valent vertices,
which includes the top-dimensional strata of trivalent ribbon graphs
(graphs where all vertices have degree 3).
He also proved that for any trivalent graph $\Gamma$ of type $((g, b), n)$, we have
 \begin{equation*}
     \frac{\Omega^d}{d!} \prod_{i=1}^n dx_i \prod_{j=1}^b dy_j
     = 2^{\alpha}
       \prod_{e\in \edges(\Gamma)} dl_e,
 \end{equation*}
 where $\alpha = 2g - 2 + n + b =  {\abs{\edges(\Gamma)} - \abs{\vertices(\Gamma)}}$, and
 $d = 3g-3 + n + b$.
 
We define
\begin{equation*}
\openVol_{(g, b),n}(\vec{x}; \vec{y}) =
\frac{1}{b!}
\int\limits_{\CoverOpenRG_{(g,b),n}(\vec{x}; \vec{y})} \frac{1}{d!}\Omega^d.
\end{equation*}
If we treat the boundary perimeters as flat coordinates on the moduli of open surfaces (which we justify in Section~\ref{sect:IntersectionNumbers}),
and take the Laplace transform of the face perimeter coordinates, we get the expression
\begin{align*}
    W_{(g, b), n}(\vec{\lambda}) &=
    \int_0^{\infty} \cdots \int_0^{\infty} d\vec{x} d\vec{y} \exp(- \vec{\lambda}\cdot\vec{x})
    \openVol_{(g,b),n}(\vec{x}; \vec{y}) \\
    &= 2^{\alpha} \int\limits_{\openRG_{(g, b), n}}\hspace{-3mm} \exp( -\vec{\lambda}\cdot \vec{x} ) 
    \prod_{e\in \edges(\Gamma)} d\ell_e. \\
\end{align*}
Naively, one might expect the integral to diverge, as there is no exponential dampening on the boundary perimeters.
However, condition (5) of Definition~\ref{defn:RibbonGraph} ensures that
\begin{equation*}
    \sum y_j \leq \sum x_i,
\end{equation*}
which allows the integral to converge.

We observe that for any face-marked ribbon graph $\Gamma$
\begin{equation*}
    \vec{\lambda}\cdot \vec{x} = \sum_{e \in \edges(\Gamma)} \tilde\lambda_e \ell_e,
\end{equation*}
where
\begin{equation*}
    \tilde\lambda_e = 
    \begin{cases}
        \lambda_{c(e_+)} + \lambda_{c(e_-)} & \text{if $e$ is an internal edge,} \\
        \lambda_{c(e)} & \text{if $e$ is a boundary edge.}
    \end{cases}
\end{equation*}
Note that $c(e_{\pm})$ refers to the two face colors on either side of an internal edge, while any boundary edge
has exactly one side being a face, making $c(e)$ unambiguous.

As well, the integral over $\openRG_{(g, b), n}$ defining $W_{(g, b), n}$ splits as a sum of integrals over the open strata, each cell of which corresponds to a graph in $\graphs^3_{(g, b), n}$, the set of all face-marked trivalent ribbon graphs of type $((g, b), n)$. Hence, we calculate
\begin{align}
    \nonumber
    W_{(g, b), n}(\vec{\lambda})
    &= \sum_{\Gamma \in \graphs_{(g,b), n}^3}
    \frac{2^{\abs{\edges(\Gamma)} - \abs{\vertices(\Gamma)}}}{\abs{\aut(\Gamma)}}
    \prod_{e\in \edges(\Gamma)}\int_{0}^{\infty} e^{-\tilde\lambda_e \ell_e} d\ell_e \\
    \label{eqn:W-GraphSum}
    &= \sum_{\Gamma \in \graphs_{(g,b), n}^3}
    \frac{2^{\abs{\edges(\Gamma)} - \abs{\vertices(\Gamma)}}}{\abs{\aut(\Gamma)}}
    \prod_{e\in \edges(\Gamma)} \frac{1}{\tilde\lambda_e}.
\end{align}

In Section~\ref{sect:FeynmanGraphs}, this identical sum appears as the Feynman graph expansion of the Kontsevich-Penner model,
while in Section~\ref{sect:IntersectionNumbers}, we justify the integral as a cohomological quantity on a particular compactification of the moduli space of open Riemann surfaces.

\section{Feynman graph expansion of the Kontsevich-Penner model}
\label{sect:FeynmanGraphs}

We wish to use Feynman graph techniques (c.f. \cite{bessis1980quantum}) to calculate the asymptotic expansion of the Kontsevich-Penner model
\begin{equation*}
    \tau_Q = \det(\Lambda)^Q \mathcal{C}^{-1}_{\Lambda}
    \int_{\mathcal{H}_N} dX \exp
    \Bigl(-\Tr\bigl(
            \frac{X^3}{3!}
            + \frac{X^2 \Lambda}{2}
            + Q\log(X+\Lambda)
        \bigr)
    \Bigr),
\end{equation*}
where
\begin{equation*}
    \mathcal{C}_{\Lambda} = e^{\Tr \Lambda^3 / 3} \int_{\mathcal{H}_N} dX\,
        e^{-\Tr \frac{X^2\Lambda}{2}},
\end{equation*}
$\Lambda = \diag(\lambda_1, \ldots, \lambda_N)$, and we are considering an expansion when $\Lambda \rightarrow (\infty, \ldots, \infty)$. The techniques involved are standard, and well described in the literature. Since the asymptotic expansion in this case is a variation of the expansion for the Kontsevich model, we follow the approach taken by Looijenga \cite{looijenga1992intersection} in what follows.

Because we are in the large $\Lambda$ regime, we can first expand
\begin{equation*}
    \exp\bigl(-\Tr Q\log(X+\Lambda)\bigr)
    = \det(\Lambda)^{-Q} \exp \sum_{k=1}^{\infty} \frac{Q}{k}\Tr(\Lambda^{-1}X)^k.
\end{equation*}

If we denote
\begin{equation*}
    \braket{f}_{\Lambda} = \mathcal{C}^{-1}_{\Lambda}
    \int_{\mathcal{H}_N} dX f(X) \exp\bigl(-\Tr \frac{X^2\Lambda}{2}\bigr),
\end{equation*}
then we wish to calculate
\begin{equation*}
    \braket{\exp\Tr\Bigl(-\frac{X^3}{3!} + \sum_{k=1}^{\infty} \frac{Q}{k}(-\Lambda^{-1}X)^k\Bigr)}_{\Lambda}.
\end{equation*}
To that end, we introduce
$\interiorHalfedges = \zset{d} \times \ZZ_3$,
$\boundaryHalfedges = \bigcup_{j=1}^{K} \zset{b_j} \times \ZZ_j$,
$\halfedges = \interiorHalfedges \cup \boundaryHalfedges$,
and the cyclic rotation operator
\begin{align*}
    \sigma_0 : \halfedges & \rightarrow \halfedges \\
    (i, j) & \mapsto (i, j+1).
\end{align*}
Note that an expression of the form
\begin{equation*}
    M(d; b_1, \ldots, b_K) = \bigl(\Tr(-X^3)\bigr)^d
    \prod_{j=1}^{K} \bigl(\Tr(-\Lambda^{-1}X)^j\bigr)^{b_j}
\end{equation*}
can be written out as a sum of monomials in the variables $X_{ij}$ and $\lambda_i^{-1}$, naturally labelled by the set of maps
\begin{equation*}
    \varphi : \halfedges \rightarrow \zset{N}
\end{equation*}
by the following correspondence. 
For a map $\varphi: \halfedges \rightarrow \zset{N}$, and
$a \in \interiorHalfedges$ we denote
$\varphi_a = X_{\varphi(a),\varphi\sigma_0(a)}$, while for
$b\in \boundaryHalfedges$ we have
$\varphi_b = \lambda^{-1}_{\varphi(b)} X_{\varphi(b),\varphi\sigma_0(b)}$. Then
\begin{equation}
    \label{eqn:MapSum}
    M(d; b_1, b_2, \ldots, b_K) = \sum_{\varphi: \halfedges \rightarrow \zset{N}}
    \prod_{c\in \halfedges} \varphi_c.
\end{equation}

By Wick's Lemma (c.f. \cite{bessis1980quantum}), we have
\begin{equation}
    \label{eqn:WickSum}
    \braket{\prod_{c\in\halfedges}\varphi_c}_{\Lambda} = 
    \sum_P \prod_{\{c, d\} \in P} \gamma(\varphi_c, \varphi_d),
\end{equation}
where 
\begin{equation*}
    \gamma(X_{i,j}, X_{k, l}) = \frac{2}{\lambda_i + \lambda_j} \delta_{i,l} \delta_{j,k},
\end{equation*}
and the sum is over the set of all pairings $P$ of elements of $\halfedges$.
Equivalently, this is the set of fixed-point-free involutions 
$\{\sigma_1: \halfedges \rightarrow \halfedges\ |\ \sigma_1^2 = 1, \sigma_1(x) \neq x\ \forall x \in \halfedges\}$,
where a pairing $P$ corresponds with an involution $\sigma_1$ by
$\{c, d\}\in P \iff \sigma_1(c) = d$.

As discussed in Section~\ref{sect:RibbonGraphs}, we can associate an unreduced ribbon graph
$\Gamma(\sigma_0, \sigma_1)$
to the pair of permutations, with the half-edges labelled bijectively by the elements of $\halfedges$.
We see that the graph has two distinct types of vertices:
    $d$ \emph{internal vertices}, each of degree 3,
    and $b_j$ \emph{boundary vertices} of degree $j$, for $j=1, \ldots, K$.
We further replace $\Gamma(\sigma_0, \sigma_1)$ with the reduced ribbon graph 
$\widetilde{\Gamma}(\sigma_0, \sigma_1)$,
obtained by ``blowing up'' each boundary vertex into a cycle of boundary edges, as depicted in Figure~\ref{fig:BoundaryBlowup}.
\begin{figure}
    \begin{tikzpicture}
        \draw (-0.75, -0.75) -- (0.75, 0.75);
        \draw (-0.75, 0.75) -- (0.75, -0.75);
        \filldraw (0, 0) circle (3pt);
        \draw [->] (1, 0) -- (2.5, 0);
        \draw (4, 0) circle (0.5)
            +(45:0.5) -- +(45:1.25)
            +(135:0.5) -- +(135:1.25)
            +(225:0.5) -- +(225:1.25)
            +(315:0.5) -- +(315:1.25);
        \draw[dashed] (4, 0) circle (0.4);
    \end{tikzpicture}
    \caption{Expanding a boundary vertex into a boundary cycle}
    \label{fig:BoundaryBlowup}
\end{figure}
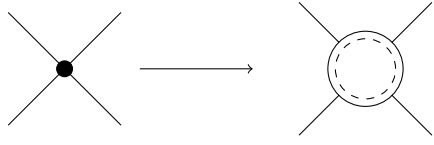
Note that $\widetilde{\Gamma}(\sigma_0, \sigma_1)$ is trivalent, and the set of boundary edges so obtained automatically satisfies condition (5) of Definition~\ref{defn:RibbonGraph}.
In addition, every half-edge $i$ incident to a boundary vertex of $\Gamma(\sigma_0, \sigma_1)$ contributes a weight of $\lambda_{\phi(i)}^{-1}$ to \eqref{eqn:WickSum}.
These weights can be uniquely transferred to the newly created boundary edges in
$\widetilde{\Gamma}(\sigma_0, \sigma_1)$.

Since $\gamma(X_{i,j}, X_{k,l})$ is non-zero exactly when $i=l$ and $j=k$, a graph
$\widetilde{\Gamma}(\sigma_0, \sigma_1)$
contributes to \eqref{eqn:WickSum} when the following condition is satisfied:
If $\sigma_1(c) = d$ then $\varphi(c) = \varphi\circ\sigma_0(d)$ and $\varphi(d) = \varphi\circ\sigma_0(c)$.  
Hence on the ribbon graph $\widetilde{\Gamma}$, $\varphi$ factors through an
$N$-coloring $\bar\varphi$ of its face cycles,
giving an $N$-colored ribbon graph $(\widetilde{\Gamma}, \bar\varphi)$.

Let $G$ denote the group of automorphisms of $\halfedges$ that commute with $\sigma_0$.
In other words, $\psi \in G$ if and only if
$\psi: \halfedges \rightarrow \halfedges$ with
$\psi(\interiorHalfedges) = \interiorHalfedges$,
$\psi(\boundaryHalfedges) = \boundaryHalfedges$, and
$\psi\circ\sigma_0 = \sigma_0\circ\psi$.
It is easy to see that $G$ is a direct product of semi-direct products, with
\begin{equation*}
    G = \left(S_d \cdot (\ZZ_3)^d\right) \times
    \prod_{j=1}^{K}\left( S_{b_j} \cdot (\ZZ_j)^{b_j}\right),
\end{equation*}
and its order given by
\begin{equation*}
    \abs{G} = d!3^d \prod_{j=1}^{K} b_j! j^{b_j}.
\end{equation*}

$G$ acts on the set of pairs $(\sigma_1, \bar\varphi)$, with two pairs defining isomorphic colored open ribbon graphs if and only if they are in the same $G$-orbit. Furthermore, the automorphism group of $(\widetilde{\Gamma}(\sigma_0, \sigma_1), \bar\varphi)$ is the $G$-stabilizer of $(\sigma_1, \bar\varphi)$.

Since
\begin{align}
    \tau_Q &=
    \braket{\exp\Tr\Bigl(
        -\frac{X^3}{3!} + \sum_{k=1}^{\infty} \frac{Q}{k}(-\Lambda^{-1}X)^k
    \Bigr)}_{\Lambda} \nonumber \\
    \label{eqn:TauExpansion}
    &= \sum_{d, (b_1, b_2, \ldots)} \frac{(-1)^d}{d! 3^d 2^d}
    \left(
        \prod_j \frac{Q^{b_k}}{b_j! k^{b_j}}
    \right)
    \braket{\Tr^d X^3 \prod \Tr^{b_j}(-\Lambda^{-1}X)^j}_{\Lambda},
\end{align}
we have
\begin{proposition}
\begin{equation}
    \label{eqn:TauGraphSum}
    \tau_Q = \sum_{(\widetilde{\Gamma}, \bar\varphi)}
    \frac{2^{\abs{\edges(\widetilde{\Gamma})} - \abs{\vertices(\widetilde{\Gamma})}}
          Q^{\abs{\boundaries(\widetilde{\Gamma})}}}
         {\abs{\aut(\widetilde{\Gamma}, \bar\varphi)}}
    \prod_{e\in\edges(\widetilde{\Gamma})} \tilde\lambda_e^{-1},
\end{equation}
where the sum is over the set of all trivalent $N$-colored ribbon graphs (both open and closed, connected and disconnected),
and we recall that
\begin{equation*}
    \tilde\lambda_e = 
    \begin{cases}
        \lambda_{\bar\varphi(e_+)} + \lambda_{\bar\varphi(e_-)} & \text{if $e$ is an internal edge} \\
        \lambda_{\bar\varphi(e)} & \text{if $e$ is a boundary edge}.
    \end{cases}
\end{equation*}
\end{proposition}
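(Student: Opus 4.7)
The plan is to substitute the Wick expansion into~\eqref{eqn:TauExpansion}, identify each contributing term with an $N$-colored trivalent open ribbon graph via the blow-up procedure, and reorganize the resulting sum using the orbit-stabilizer theorem.

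First I would apply~\eqref{eqn:MapSum} to expand $M(d;\vec b)$ as a sum of monomials indexed by maps $\varphi:\halfedges\to\zset{N}$, and then apply Wick's lemma~\eqref{eqn:WickSum} to rewrite each $\braket{M}_\Lambda$ as a double sum over such colorings $\varphi$ and fixed-point-free involutions $\sigma_1$ of $\halfedges$. The Kronecker deltas in $\gamma(X_{ij},X_{kl})=2\delta_{il}\delta_{jk}/(\lambda_i+\lambda_j)$ force every nonzero contribution to satisfy the condition that $\varphi$ is constant on each face cycle of the ribbon graph $\Gamma(\sigma_0,\sigma_1)$, so that it descends to a face coloring $\bar\varphi$. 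Each surviving term contributes a product of propagator factors $2/\tilde\lambda_e$ over the edges of $\Gamma(\sigma_0,\sigma_1)$, together with a factor $\lambda^{-1}_{\varphi(c)}$ for every boundary half-edge $c$.

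Next I would carry out the blow-up described in the preceding paragraphs, replacing each boundary vertex of $\Gamma(\sigma_0,\sigma_1)$ by a small cycle of boundary edges to obtain the trivalent open ribbon graph $\widetilde{\Gamma}(\sigma_0,\sigma_1)$. The $\lambda^{-1}$ weight at each boundary half-edge transfers uniquely to the adjacent new boundary edge, producing the required factor $1/\lambda_{\bar\varphi(e)}$, where $\bar\varphi(e)$ is the color of the unique face of $\widetilde{\Gamma}$ adjacent to $e$. Two pairs $(\sigma_1,\bar\varphi)$ yield isomorphic $N$-colored trivalent ribbon graphs exactly when they lie in the same $G$-orbit, and the $G$-stabilizer of such a pair equals $\aut(\widetilde{\Gamma},\bar\varphi)$. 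Orbit-stabilizer then converts the prefactor $\abs{G}^{-1}=(d!\,3^d\prod_j b_j!\,j^{b_j})^{-1}$ appearing in~\eqref{eqn:TauExpansion} into the automorphism weight $\abs{\aut(\widetilde{\Gamma},\bar\varphi)}^{-1}$ in the sum over isomorphism classes.

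Finally I would reconcile the remaining scalar factors. The surplus $1/2^d$ in~\eqref{eqn:TauExpansion} combines with the $2^{\abs{\interiorEdges(\widetilde{\Gamma})}}$ from the Wick propagators to yield exactly $2^{\abs{\edges(\widetilde{\Gamma})}-\abs{\vertices(\widetilde{\Gamma})}}$, by invoking trivalence ($3\abs{\vertices}=2\abs{\edges}$) together with the equality between the number of boundary vertices and boundary edges on each boundary cycle. The power $Q^{\sum_j b_j}$ matches $Q^{\abs{\boundaries(\widetilde{\Gamma})}}$ because each boundary vertex of $\Gamma$ produces exactly one boundary cycle of $\widetilde{\Gamma}$ under blow-up. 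The signs $(-1)^d$ from $-X^3/3!$ and $(-1)^{\sum_j jb_j}$ from $(-\Lambda^{-1}X)^j$ combine to an exponent equal to the number of internal vertices plus the number of boundary edges, which is even by trivalence, so they cancel. The chief obstacle is this careful bookkeeping through the blow-up, particularly verifying that the $\lambda^{-1}$ weights carry the correct face colors and that the parity and factor-of-two identities follow cleanly from trivalence and boundary-cycle structure; once these are pinned down, the result assembles routinely.
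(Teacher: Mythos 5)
Your proposal is correct and follows essentially the same route as the paper: the Wick expansion indexed by pairings, the blow-up of boundary vertices into boundary cycles, the orbit–stabilizer conversion of $\abs{G}^{-1}$ into $\abs{\aut(\widetilde{\Gamma},\bar\varphi)}^{-1}$, and the bookkeeping of the powers of $2$, $Q$, and the signs (the paper's formal proof addresses only the sign cancellation, via the equivalent observation that $3d+\sum_j jb_j = 2\abs{\interiorEdges}$ is even, with the rest established in the surrounding text). No gaps.
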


\begin{proof}
    The only outstanding issue is the fate of the negative signs present in \eqref{eqn:TauExpansion}.
    We observe that for any graph, $3d + \sum j b_j = 2\abs{\text{internal edges}}$.
    Since $-1$ appears in \eqref{eqn:TauExpansion} with exponent $d + \sum j b_j$, they completely cancel for \eqref{eqn:TauGraphSum}.
    This completes the proof.
\end{proof}

As is typical in these types of counting problems,
$F_Q = \log \tau_Q$ is obtained by restricting the sum to connected ribbon graphs.
Given our expression \eqref{eqn:W-GraphSum} in the previous section for
$W_{(g, b), n}(\vec{\lambda})$, we can immediately see
\begin{corollary}
\begin{equation*}
    F_Q = \sum_{g, b, n} \frac{Q^b}{n!}
    \sum_{\phi: \zset{n} \rightarrow \zset{N} }
    W_{(g, b), n}(\lambda_{\phi(1)}, \ldots, \lambda_{\phi(n)}).
\end{equation*}
\end{corollary}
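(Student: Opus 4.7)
The plan is to compare the graph expansions \eqref{eqn:TauGraphSum} (for $\tau_Q$) and \eqref{eqn:W-GraphSum} (for $W_{(g,b),n}$), and show they line up after one takes the logarithm and groups by topological type. The starting point is the standard fact that $F_Q = \log \tau_Q$ restricts the sum in \eqref{eqn:TauGraphSum} to \emph{connected} trivalent $N$-colored ribbon graphs; this follows from the exponential formula applied to the disjoint-union structure of the graph sum (the weight $2^{|\edges|-|\vertices|} Q^{|\boundaries|} \prod_e \tilde\lambda_e^{-1}$ is multiplicative over connected components, and $|\aut(\widetilde\Gamma,\bar\varphi)|$ factors appropriately through the symmetries permuting isomorphic components).

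Next I would group connected graphs in \eqref{eqn:TauGraphSum} by topological type $((g,b),n)$, and pass from the $N$-colored to the face-marked enumeration via orbit--stabilizer. Concretely, for a connected unmarked trivalent ribbon graph $\Gamma^0$ of type $((g,b),n)$, the group $\aut(\Gamma^0)$ acts both on the set of face $N$-colorings $\bar\varphi: \faces(\Gamma^0)\to\zset N$ and on the set of bijective face-markings $c:\faces(\Gamma^0)\to\zset n$. Since the weight $\prod_e \tilde\lambda_e^{-1}$ is $\aut(\Gamma^0)$-invariant, the orbit--stabilizer identity gives
\begin{equation*}
    \sum_{[\bar\varphi]} \frac{1}{|\aut(\Gamma^0,\bar\varphi)|}\prod_e \tilde\lambda_e^{-1}
    = \frac{1}{|\aut(\Gamma^0)|}\sum_{\bar\varphi:\faces(\Gamma^0)\to\zset N} \prod_e \tilde\lambda_e^{-1},
\end{equation*}
where the left sum runs over isomorphism classes of $N$-colored graphs with underlying unmarked graph $\Gamma^0$. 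Hence
\begin{equation*}
    F_Q = \sum_{g,b,n} Q^b \sum_{\Gamma^0} \frac{2^{|\edges|-|\vertices|}}{|\aut(\Gamma^0)|}
    \sum_{\bar\varphi:\faces(\Gamma^0)\to\zset N}\prod_e \tilde\lambda_e^{-1},
\end{equation*}
where the inner sum is over isomorphism classes of connected unmarked trivalent ribbon graphs of type $((g,b),n)$.

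To finish, I would reintroduce face-markings on the right-hand side of the claimed identity. Every map $\bar\varphi:\faces(\Gamma^0)\to\zset N$ factors as $\bar\varphi=\phi\circ c$ for exactly $n!$ choices of the pair $(c,\phi)$ with $c$ bijective and $\phi:\zset n\to\zset N$. Thus
\begin{equation*}
    \sum_{\bar\varphi} \prod_e \tilde\lambda_e^{-1}
    = \frac{1}{n!}\sum_{\phi:\zset n\to\zset N}\sum_{c\text{ bijective}}\prod_e \tilde\lambda_e(\phi\circ c)^{-1},
\end{equation*}
and applying the same orbit--stabilizer identity a second time converts $\frac{1}{|\aut(\Gamma^0)|}\sum_c$ into the sum over face-marked graphs $\Gamma\in\graphs^3_{(g,b),n}$ weighted by $1/|\aut(\Gamma)|$. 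Matching this against the expression \eqref{eqn:W-GraphSum} for $W_{(g,b),n}(\lambda_{\phi(1)},\ldots,\lambda_{\phi(n)})$ yields the corollary.

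The only genuine obstacle is the bookkeeping around the two distinct uses of orbit--stabilizer (to remove $N$-colorings, then to reinstate face-markings) and confirming that the two conventions for $\tilde\lambda_e$ agree under the substitution $\bar\varphi = \phi\circ c$; both formulations use the identical rule in Definition~\ref{defn:RibbonGraph}, with the only subtlety being that boundary edges always have a single adjacent face (so $c(e)$ and $\bar\varphi(e)$ are unambiguous), which is built into the blowup of boundary vertices in Figure~\ref{fig:BoundaryBlowup}. The factor $Q^b=Q^{|\boundaries(\widetilde\Gamma)|}$ transfers directly, and the factor $2^{|\edges|-|\vertices|}$ is intrinsic to the unmarked graph, so it passes through all the repackaging unchanged.
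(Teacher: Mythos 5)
Your argument is correct and follows the same route the paper takes implicitly: the paper simply asserts that $F_Q=\log\tau_Q$ restricts \eqref{eqn:TauGraphSum} to connected graphs and then matches against \eqref{eqn:W-GraphSum} "immediately," whereas you supply the two orbit--stabilizer steps (passing from isomorphism classes of $N$-colored graphs to all colorings of an unmarked graph, then factoring each coloring as $\phi\circ c$ through a bijective face-marking with the $n!$ overcount) that justify the $\frac{Q^b}{n!}\sum_{\phi}$ prefactor. This is exactly the bookkeeping the paper leaves to the reader, and your verification that $\tilde\lambda_e$ agrees under $\bar\varphi=\phi\circ c$ closes the argument.
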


\begin{example}
    The coefficient of $t_3 = \frac{1}{3}\sum \lambda_i^{-3}$ for $F_Q$ has contributions from two graphs, as depicted in Figure~\ref{fig:t3Graphs}.
    \begin{figure}
        \begin{tikzpicture}[scale=0.65]
            \draw[xshift=-2cm] (2, 0) circle (1.5cm);
            \draw[rounded corners=15pt, xshift=-2cm] (3.5, 0.75) +(-90:0.75) arc
            (-90:127:0.75)
            (3.5, 0.75) +(160:0.75) -- (2, 0) -- (0.5, 0)
            (2, 0);
            \draw[xshift=6cm] (0, 0) circle (1)
                (1, 0) -- (3, 0)
                (4, 0) circle (1);
            \draw[xshift=6cm, dashed] (0, 0) circle (0.8)
                (4, 0) circle (0.8);
        \end{tikzpicture}
        \caption{Graphs contributing to the coefficient of $t_3$ in $F_Q$. Dashed lines delineate boundary cycles.}
        \label{fig:t3Graphs}
    \end{figure}
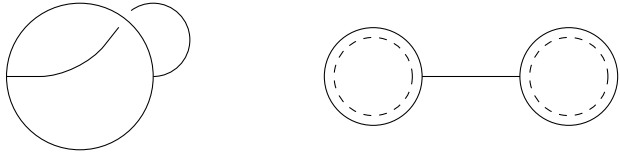
    The first graph has no boundary components and an automorphism group of order 6, while the second graph has two boundary components and an automorphism group of order 2. Hence
    \begin{equation*}
        [t_3]F_Q =
            \frac{1}{8} + \frac{3}{2}Q^2.
    \end{equation*}
\end{example}

\begin{example}
    The coefficient of $t_1 t_2 = \frac{1}{2} \sum \lambda_i^{-1}\sum\lambda_i^{-2}$ for $F_Q$ has contributions from the two graphs depicted in Figure~\ref{fig:t1t2Graphs}.
    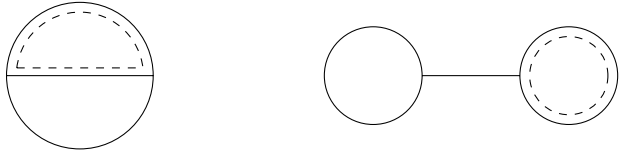
\begin{figure}
        \begin{tikzpicture}[scale=0.65]
            \draw (0, 0) circle (1.5)
                (-1.5, 0) -- (1.5, 0);
            \draw[dashed] (173:1.3) arc (173:7:1.3)
                (173:1.3) -- (7:1.3);
            \draw[xshift=6cm] (0, 0) circle (1)
                (1, 0) -- (3, 0)
                (4, 0) circle (1);
            \draw[xshift=6cm, dashed] (4, 0) circle (0.8);
        \end{tikzpicture}
        
        \caption{Graphs contributing to the coefficient of $t_1 t_2$ in $F_Q$.}
        \label{fig:t1t2Graphs}
    \end{figure}
    All graphs have a single boundary component and trivial automorphism group, except for the left graph in the case when the two faces have identical colors, where the automorphism group has order 2. Hence
    \begin{equation*}
        [t_1t_2]F_Q = 2Q.
    \end{equation*}
\end{example}

\section{Combinatorial representations of surfaces with boundary}
\label{sect:CombinatorialRepresentations}
In this section we prove that the complex of open ribbon graphs is equivalent to the moduli space of open Riemann surfaces. This equivalence enables the construction of a simple compactification.
In what follows, all surfaces have at least one interior marked point,
and we exclude the unstable surfaces of type
$((0, 0), 1)$, $((0, 0), 2)$, and $((0, 1), 1)$.

\begin{definition}
    A \emph{Riemann surface} of type $((g, b), n)$ consists of a complex analytic structure on a compact surface of genus $g$, with $b$ boundary components,
    together with $n$ distinct marked points $a_1, \ldots, a_n$ in the interior of the surface.
    The Riemann surface is \emph{closed} when $b=0$, otherwise it is \emph{open}.
    We require each boundary component to have a holomorphic collar structure.
    The moduli space of all such surfaces is denoted $\openModuli_{(g,b),n}$.
    It is a real analytic orbifold of dimension $6g - 6 + 3b  + 2n$.
\end{definition}

From the definition of a Riemann surface $\Sigma$ of type $((g, n), n)$,
one may canonically associate to such a surface a closed Riemann surface by constructing the double $\double\Sigma$.
If $\Sigma$ has genus $g$, with $b$ boundary components and $n$ interior marked points,
then $\double\Sigma$ will have genus $2g +b - 1$ and $2n$ interior marked points.
Moreover, it comes equipped with an anti-holomorphic involution
$\rho: \double\Sigma \rightarrow \double\Sigma$,
where the quotient space is equivalent to $\Sigma$, and the fixed point set is identified with the boundary of $\Sigma$.

We may use the uniformization theorem to uniquely associate to $\double\Sigma$ a complete, finite area hyperbolic metric on the punctured surface
($\double\Sigma$ with the marked points removed).
This conformally equivalent hyperbolic surface will have an orientation reversing isometric involution, with the fixed point set being a union of simple closed geodesics.

We wish to assign a closed metric ribbon graph to $\double\Sigma$, but to do so uniquely we must first specify a positive weight for each marked point.
In our case, we must choose weights so that if marked point $a_i$ has weight $x_i \in \RR_+$,
then $\rho(a_i)$ also has weight $x_i$.
One method of constructing a ribbon graph that captures the geometry of the surface is to use the complex structure through Jenkins-Strebel differentials (c.f. \cite{strebel1984quadratic}).
However, the current work is more naturally suited to the cut-locus construction of Bowditch and Epstein \cite{MR935529},
which uses the hyperbolic structure of the surface.

To summarize the approach, we first use the weights to find horocycles in a neighborhood of each marked point/puncture. 
In particular, we must uniformly rescale the weights so that they sum to 1 (the construction is scale invariant),
then choose the unique horocycle of length given by the rescaled weight of that puncture. We let $\widehat{\Gamma} \subset \double\Sigma$ be the set of points with two or more shortest geodesics to the collection of horocycles.
As proven in \cite{MR935529}, the set $\widehat{\Gamma}$ enjoys a number of nice properties, including:
\begin{enumerate}
    \item $\widehat{\Gamma}$ is a closed ribbon graph, with no vertices of degree 1 or 2. The faces of the ribbon graph are homotopic to the horocycle neighborhoods of the punctures in $\double\Sigma$.
    \item The edges of $\widehat{\Gamma}$ are geodesic segments in $\double\Sigma$.
    \item Each edge can be assigned a length by taking the length of the section of the horocycle
        corresponding to that edge (there is a symmetry exchanging the two sides of the edge, making it a well-defined quantity).

    \item The sum of the assigned edge lengths around a face cycle equals the weight of the corresponding puncture.
\end{enumerate}

For the case at hand, where the surface has an isometric involution, it is easy to see that the isometry preserves the ribbon graph. Moreover, the fixed point set of the involution is a subset of the ribbon graph. Hence the quotient graph is a ribbon graph, where the collection of boundary edges is exactly the set of edges fixed point-wise by the involution.
Thus after following the arguments presented in \cite{MR935529}, which adapt essentially without change to the present situation, we have
\begin{theorem}
    In the commutative diagram
\begin{equation*}
    \begin{tikzcd}[column sep=tiny]
        \openModuli_{(g,b), n} \times \RR_+^n \arrow{rr}{\Phi}
        \arrow{dr}
        & & \openRG_{(g, b), n} \arrow{dl} \\
        & \RR_+^n,&
    \end{tikzcd}
\end{equation*}
   the Bowditch-Epstein map $\Phi$ is an equivalence of orbifolds. 
\end{theorem}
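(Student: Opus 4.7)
The plan is to construct the map $\Phi$ and its inverse by systematically reducing to the closed Bowditch-Epstein correspondence via the doubling construction, then checking the extra structure (the boundary) is faithfully preserved.

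For the forward direction, given a pair $(\Sigma, \vec x) \in \openModuli_{(g,b),n} \times \RR_+^n$, I would form the double $\double\Sigma$ of type $(2g+b-1, 2n)$, equipped with its canonical anti-holomorphic involution $\rho$, and assign the symmetric weight vector $(x_1,\ldots,x_n,x_1,\ldots,x_n)$ to the $2n$ marked points (pairing $a_i$ with $\rho(a_i)$). Uniformizing to a complete hyperbolic metric of finite area, the uniqueness statement in the uniformization theorem promotes $\rho$ to an orientation-reversing isometric involution whose fixed locus is a disjoint union of simple closed geodesics, namely the geodesic representatives of the boundary curves of $\Sigma$. Applying the closed Bowditch-Epstein construction of \cite{MR935529} produces an invariantly defined metric ribbon graph $\widehat\Gamma \subset \double\Sigma$ as the cut locus of the symmetric horocyclic neighborhoods.

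Next I would verify the four properties needed to descend $\widehat\Gamma$ to an element of $\openRG_{(g,b),n}$. First, because the symmetric weights and the horocycle neighborhoods are $\rho$-equivariant, uniqueness of the cut locus forces $\rho(\widehat\Gamma) = \widehat\Gamma$, so $\widehat\Gamma$ inherits a cellular $\rho$-action. Second, each fixed geodesic of $\rho$ is the locus of points equidistant from the two paired horocycles on either side, hence lies inside $\widehat\Gamma$ and appears as a subcomplex of edges; these are designated the boundary edges of the quotient graph $\Gamma := \widehat\Gamma/\rho$. Combined with the fact that interior edges of $\widehat\Gamma$ either lie off the fixed set or are paired freely by $\rho$, this shows conditions (i) and (ii) of Definition~\ref{defn:RibbonGraph} part~(5), while the no-degree-$\leq 2$ property descends from the closed case. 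The face cycles of $\Gamma$ are in canonical bijection with the interior marked points $a_i$, and the sum of edge lengths around face $i$ equals $x_i$, providing the face-marking that lands us in $\openRG_{(g,b),n}$ and proving commutativity of the triangle.

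For the inverse direction I would dualize the construction: given $(\Gamma, \vec\ell) \in \openRG_{(g,b),n}$ with perimeter vector $\vec x$, form a ``combinatorial double'' $\widehat\Gamma$ by taking two copies of $\Gamma$ and identifying them along the boundary edges (with identical lengths). This yields a closed metric ribbon graph of type $(2g+b-1, 2n)$ equipped with an edge-preserving involution that fixes exactly the boundary subgraph. Invoking the inverse Bowditch-Epstein correspondence, $\widehat\Gamma$ is realized as the cut locus of a unique closed hyperbolic surface $\double\Sigma$ with symmetric horocycle weights, and by the uniqueness again the combinatorial involution is induced by a unique anti-holomorphic involution $\rho$ on $\double\Sigma$. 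Passing to the quotient produces a Riemann surface with boundary representing the required class in $\openModuli_{(g,b),n}$, and equivariance of automorphism groups yields the orbifold statement.

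The main obstacle is verifying the equivariance and rigidity assertions underpinning each direction: specifically, that the cut locus on $\double\Sigma$ really is $\rho$-invariant (requiring that the geodesic fixed set behaves correctly with respect to the horocyclic distance function near the boundary collar) and, conversely, that the combinatorial involution on the doubled graph is realized by a unique geometric involution compatible with the hyperbolic structure produced by the inverse Bowditch-Epstein map. Once this $\rho$-equivariance is established, the remaining bookkeeping — face-markings, automorphism groups, and continuity under edge contraction — follows the closed case of \cite{MR935529} essentially verbatim.
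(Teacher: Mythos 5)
Your proposal follows essentially the same route as the paper: form the double $\double\Sigma$ with its anti-holomorphic involution $\rho$, assign $\rho$-symmetric weights, apply the Bowditch--Epstein cut-locus construction, observe that the cut locus is $\rho$-invariant with the fixed geodesics contained in it, and pass to the quotient to obtain the open ribbon graph. Your additional detail on the inverse (the combinatorial double of the graph and the rigidity argument promoting the graph involution to a geometric one) is exactly the content the paper delegates to the statement that the arguments of \cite{MR935529} adapt without change, so the two arguments agree.
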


We note that an open ribbon graph can be thought of as a closed ribbon graph via the forgetful map which does not distinguish between boundary and non-boundary edges. Hence we have the sequence of maps
\begin{equation*}
    \CoverOpenRG_{(g,b), n}
    \hookrightarrow \RG_{g, b+n} 
    \xrightarrow{\sim} \moduli_{g, b+n} \times \RR^{b+n}_+
    \hookrightarrow \Mbar_{g, b+n}\times \RR^{b+n}_{\geq 0},
\end{equation*}
where we recall that $\CoverOpenRG_{(g, b), n}$ is the $b!$-fold cover of $\openRG_{(g, b), n}$ obtained by labeling the $b$ boundary components.
Since all of the maps are equivariant with respect to the natural $S_b$ action, we have an inclusion
\begin{equation*}
    \beta: \openModuli_{(g, b), n} \times \RR_+^n \hookrightarrow
    (\Mbar_{g, b+n} \times \RR^{b+n}_{\geq 0})/S_b.
\end{equation*}

Next note that for any fixed
$\vec{x} = (x_1, \ldots, x_n)$, the closure of $\beta(\openModuli_{(g,b), n} \times \{\vec{x}\})$ is compact.
This follows from the fact that for any open metric ribbon graph
$\Gamma \in \openRG_{(g,b), n}$, having face perimeters of length $\vec{x}$ and boundary perimeters of length $\vec{y}$,
we must have the inequality
\begin{equation*}
    \sum_{i=1}^b y_i \leq \sum_{j=1}^n x_j.
\end{equation*}
We denote this compact space by $\compactOpenModuli_{(g, b), n} (\vec{x})$.
We will also make use of the compact space
$\compactOpenModuli_{(g, b), n}(\vec{x}; \vec{y}) \subset \Mbar_{g, b+n} \times \RR_{\geq 0}^{b+n}$,
which is the closure of the image of $\CoverOpenRG_{(g, b), n}(\vec{x}; \vec{y})$.

\section{Intersection number calculations}
\label{sect:IntersectionNumbers}

In this section we justify the integral of Kontsevich's piecewise-defined form over the ribbon graph complex as a tautological class calculation on the compactification of the moduli space of open Riemann surfaces.
To do so we adapt to the open Riemann surface case a scaling-limit procedure for the Weil-Petersson form given by Do \cite{2010arXiv1010.4126D}.

As is well known, the moduli space $\moduli_{g,n}$ has a symplectic structure given by the Weil-Petersson form
$\omega_{WP}$.
Moreover, Wolpert \cite{wolpert1983homology} has shown that the form extends smoothly to the boundary in $\Mbar_{g,n}$, defining a cohomological class $[\omega_{WP}] \in H^2(\Mbar_{g,n}; \QQ)$.
However, for our purposes, we also need the Weil-Petersson form defined on the moduli space of bordered hyperbolic surfaces,
$\moduli_{g,n}(L_1, \ldots, L_n)$,
where a point in this space is a hyperbolic metric on a compact surface with $n$ boundary components,
with the boundaries being geodesics of specified lengths $(L_1, \ldots, L_n)$.
Note that this space has more in common with the usual moduli space $\moduli_{g,n}$, being, in fact, diffeomorphic, rather than the moduli space of open Riemann surfaces under consideration in the present work.
It was proven by Mirzakhani \cite{Mirzakhani:2007kc} that the pull back of Weil-Petersson forms under a diffeomorphism
\begin{equation*}
    f_{\vec{L}} : \Mbar_{g, n} \rightarrow \Mbar_{g,n}(L_1, \ldots, L_n)
\end{equation*}
satisfies
\begin{equation*}
    f^{*}_{\vec{L}}[\omega_{WP}] = [\omega_{WP}] + \frac{1}{2}\sum L_i^2 \psi_i.
\end{equation*}

Furthermore, Mondello \cite{mondello2006triangulated} and Do \cite{2010arXiv1010.4126D} have proven that in the scaling limit
$\vec{L} \rightarrow \infty$,
the Weil-Petersson form converges pointwise to Kontsevich's symplectic form.
To be more precise, the Bowditch-Epstein construction works equally well on surfaces in
$\moduli_{g, n}(\vec{L})$,
with the weights coming from the lengths of the geodesic boundaries.
If 
\begin{align*}
    \Phi & : \moduli_{g,n} \times \RR_+^{n} \xrightarrow{\sim} \RG_{g,n} \\
    \Phi_{\vec{L}} & : \moduli_{g,n}(\vec{L}) \xrightarrow{\sim} \RG_{g,n}(\vec{L})
\end{align*}
are the two different Bowditch-Epstein diffeomorphisms, then we denote the composition
\begin{equation*}
    f_{\vec{L}} = \Phi_{\vec{L}}^{-1} \circ
    \Phi \Bigr|_{\moduli_{g,n}\times \{\vec{L} \} } :
    \moduli_{g,n} \xrightarrow{\sim} \moduli_{g,n}(\vec{L}).
\end{equation*}
Note that this map is an expression of the fact that for any metric ribbon graph, one can uniquely construct a cusped hyperbolic surface, and also a bordered hyperbolic surface.

We perform a scaling limit by considering the pullback of the rescaled Weil-Petersson form $\frac{1}{t^2}\omega_{WP}$ under the family of maps
$f_{t\vec{L}}$, as $t\rightarrow \infty$.
In fact, we have \cite{2010arXiv1010.4126D}
\begin{equation*}
    \Omega = \lim_{t\rightarrow\infty}\frac{1}{t^2} f^*_{t\vec{L}} \omega_{WP},
\end{equation*}
where the convergence is pointwise on the open dense subset of $\moduli_{g, n}$ corresponding with trivalent ribbon graphs in the combinatorial model.

Do uses this fact to justify the equality
\begin{equation*}
    \int\limits_{\RG_{g,n}(\vec{L})} \frac{\Omega^d}{d!} = 
    \int\limits_{\Mbar_{g,n}} \frac{1}{d!}\Bigl(
        \sum \frac{L_i^2}{2}\psi_i
    \Bigr)^d,
\end{equation*}
thus sidestepping the delicate quotient compactification of $\RG_{g,n}$ in Kontsevich's original proof of the Witten conjecture.

The same analysis applies verbatim to the moduli space of open Riemann surfaces, considered through its image in $\moduli_{g, b+n}$.
As a result, one sees that
$\Omega \bigr|_{\CoverOpenRG_{(g,b), n}(\vec{x}; \vec{y})}$
represents the scaled sum of $\psi$-classes
$\sum \frac{x_i^2}{2}\psi_i + \sum \frac{y_j^2}{2}\psi_{j+n}$.
In other words, we have proven
\begin{theorem}
    \begin{equation*}
        \openVol_{(g, b), n}(\vec{x}; \vec{y}) = 
        \frac{1}{b!} \int\limits_{\compactOpenModuli_{(g, b), n}(\vec{x}; \vec{y})}
        \frac{1}{d!} \biggl( 
            \sum \frac{x_i^2}{2}\psi_i + \sum \frac{y_j^2}{2}\psi_{j+n}
        \biggr)^d,
    \end{equation*}
    where $d=3g-3 + n + b$.
\end{theorem}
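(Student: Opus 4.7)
The plan is to deduce this theorem directly from the scaling-limit analysis just summarized: the left-hand side is by definition $\frac{1}{b!}\int_{\CoverOpenRG_{(g,b),n}(\vec{x};\vec{y})} \frac{\Omega^d}{d!}$, and the strategy is to transport this integral across the Bowditch-Epstein equivalence, rewrite $\Omega$ as a limit of rescaled Weil-Petersson forms, and invoke Mirzakhani's pullback identity to collapse the limit to the desired $\psi$-class integrand.

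First, I would set up the ambient identification. Using the inclusion
\begin{equation*}
    \CoverOpenRG_{(g,b),n}(\vec{x};\vec{y}) \hookrightarrow \RG_{g,b+n}(\vec{L})
\end{equation*}
with $\vec{L} = (x_1,\ldots,x_n,y_1,\ldots,y_b)$, the form $\Omega$ on the open ribbon graph complex is the restriction of Kontsevich's form on the closed complex. The closure of the image of $\CoverOpenRG_{(g,b),n}(\vec{x};\vec{y})$ in $\Mbar_{g,b+n}\times\RR_{\geq 0}^{b+n}$ is by definition $\compactOpenModuli_{(g,b),n}(\vec{x};\vec{y})$, so it suffices to prove the integral identity for $\Omega^d/d!$ viewed as a form on this compact subset.

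Next, I would apply Do's scaling limit for the fixed perimeter vector $\vec{L}$:
\begin{equation*}
    \frac{\Omega^d}{d!} = \lim_{t\to\infty} \frac{1}{t^{2d}} f^*_{t\vec{L}}\,\frac{\omega_{WP}^d}{d!}
\end{equation*}
pointwise on the open trivalent locus. Mirzakhani's theorem gives
\begin{equation*}
    \frac{1}{t^{2d}} f^*_{t\vec{L}}\,\omega_{WP}^d
    = \frac{1}{t^{2d}}\Bigl(\omega_{WP} + \tfrac{t^2}{2}\textstyle\sum L_i^2 \psi_i\Bigr)^d,
\end{equation*}
and in the limit $t\to\infty$ only the top power of the $\psi$-class term survives, producing the integrand $\frac{1}{d!}\bigl(\sum\frac{x_i^2}{2}\psi_i + \sum\frac{y_j^2}{2}\psi_{j+n}\bigr)^d$. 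Combining the pointwise convergence with uniform boundedness (the polynomial in $t$ has degree exactly $2d$ matching the normalization) justifies passing the limit through the integral on the compact space $\compactOpenModuli_{(g,b),n}(\vec{x};\vec{y})$, and dividing by $b!$ for the boundary labeling cover yields the stated identity.

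The main obstacle I anticipate is not the algebraic manipulation but the justification that one may integrate the pointwise limit: the forms $\Omega$ and the pulled-back Weil-Petersson forms are piecewise smooth, with the Kontsevich form degenerating on the locus of non-trivalent graphs (positive codimension), and the compactification locus along $\partial\Mbar_{g,b+n}$ must be controlled. This is precisely the step where one relies on Do's analysis, extended to the present setting by the observation that the open moduli space sits inside the closed one via doubling so the needed estimates are inherited. Once this is in place, the calculation reduces to a one-line cohomological evaluation and the theorem follows.
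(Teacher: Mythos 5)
Your proposal is correct and follows essentially the same route as the paper, which simply asserts that Do's scaling-limit argument (Bowditch--Epstein identification, pointwise convergence of $\frac{1}{t^2}f^*_{t\vec L}\omega_{WP}$ to $\Omega$, and Mirzakhani's pullback identity $f^*_{\vec L}[\omega_{WP}]=[\omega_{WP}]+\frac12\sum L_i^2\psi_i$) ``applies verbatim'' to the open moduli space via its embedding in $\moduli_{g,b+n}$. In fact you spell out more of the mechanism than the paper does, and you correctly identify the only delicate point (interchanging the limit with integration over the compact image, and controlling the behavior near the non-trivalent locus and $\partial\Mbar_{g,b+n}$) as the step inherited from Do's analysis; the only minor quibble is that the $1/b!$ is already built into both sides by definition, so no additional division is needed.
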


The one remaining subtle point concerns justifying integrating over the $y$-variables as flat coordinates. The explanation is as follows.

We let
\begin{equation*}
\begin{tikzcd}
    (S^1)^b \arrow[hook]{r} &  \torusOpenModuli_{(g, b), n} \arrow{d} \\
    & \openModuli_{(g,b), n}
\end{tikzcd}
\end{equation*}
be the $b$-torus orbifold bundle obtained by allowing exactly one (labeled) marked point on each boundary component.
In this notation, we understand $\vec{\gamma} = (\gamma_1, \ldots, \gamma_b)$ to be the boundary geodesics of the surface.
This space has an equivalent representation
\begin{equation*}
\begin{tikzcd}
    \torusOpenModuli_{(g,b), n} \ar{r}{\sim} & \moduli^{\vec{\gamma}}_{g, n+2b} \arrow{d} \\
    & \moduli_{g, n+2b}
\end{tikzcd}
\end{equation*}
obtained by capping off each boundary component with a sphere with 1 boundary and 2 cusps, as depicted in Figure~\ref{fig:CappingSurface}.
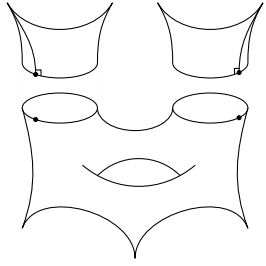
\begin{figure}
\begin{center}
\begin{tikzpicture}[scale=1]
    \draw (0,0) ellipse (0.5 and 0.20)
        (2, 0) ellipse (0.5 and 0.2)
        (-.5, .60) arc (-180: 0: 0.5 and 0.2)
        (1.5, .6) arc (-180: 0: 0.5 and 0.2);
    \draw (0.5, 0) arc (-180:0: 0.5 and 0.3)
        (-0.5, 0) to[out=-75, in=70] (-.5, -1.6)
        (-.5, -1.6) to[out=70, in=90] (1, -2.0)
        (1, -2.0) to[out=90, in=110] (2.5, -1.6)
        (2.5, 0) to[out=-105, in=110] (2.5, -1.6);
    \draw (-.5, .6) to[out=90, in=-60] (-.7, 1.4)
        (-.7, 1.4) to[out=-60, in=-120] (.7, 1.4)
        (.7, 1.4) to[out=-120, in=90] (.5, .6);
    \draw[xshift=2cm] (-.5, .6) to[out=90, in=-60] (-.7, 1.4)
        (-.7, 1.4) to[out=-60, in=-120] (.7, 1.4)
        (.7, 1.4) to[out=-120, in=90] (.5, .6);
    \filldraw (0, 0) ++(-130: 0.5 and 0.2) circle (0.75pt);
    \filldraw (2, 0) ++(-40: 0.5 and 0.2) circle (0.75pt);
    \draw (0, .6) ++(-130: 0.5 and 0.2) to[out=90, in=-60] (-.7, 1.4);
    \draw (2, .6) ++(-40: 0.5 and 0.2) to[out=90, in=-120] (2.7, 1.4);
    \filldraw (0, .6) ++(-130: 0.5 and 0.2) circle (0.75pt);
    \draw (0, .6) ++(-120: 0.5 and 0.2) --  ++(0, 0.08) -- ++(-0.08, 0);
    \draw (2, .6) ++(-50: 0.5 and 0.2) -- ++(0, 0.08) -- ++(0.07, 0);
    \filldraw (2, .6) ++(-40: 0.5 and 0.2) circle (0.75pt);
    \draw[yshift=0.4mm] (0.3, -.8) to[out=-40, in=-140] (1.8, -.8);
    \draw (0.5, -.9) to[out=45, in=135] (1.6, -.9);
    \draw[dash pattern=on 0pt off 2\pgflinewidth, yshift=0.6cm] (-130: 0.5 and 0.2) -- ++(0, -0.6)
        (180: 0.5 and 0.2) -- ++(0, -0.6)
        (0: 0.5 and 0.2) -- ++(0, -0.6);
    \draw[dash pattern=on 0pt off 3\pgflinewidth, xshift=2cm, yshift=0.6cm] (-40: 0.5 and 0.2) -- ++(0, -0.6)
        (180: 0.5 and 0.2) -- ++(0, -0.6)
        (0: 0.5 and 0.2) -- ++(0, -0.6);
\end{tikzpicture}
\end{center}
\caption{Capping an open surface.}
\label{fig:CappingSurface}
\end{figure}
Note that the unique geodesic seam joining the boundary of a capping sphere to one of its cusps is lined up with the marked point on the boundary.
This results in a point in the space
\begin{equation*}
    \moduli^{\vec{\gamma}}_{g, n+2b} = 
    \Biggl\{ 
        (C, \gamma_1, \ldots, \gamma_b) \  \Biggl|\ 
        \parbox{7.5cm}{$C \in \moduli_{g, n+2b}$, with marked cusps $a_1, \ldots, a_{n+2b}$,
        and $\gamma_i$ is a simple closed geodesic enclosing marked points $a_{n+2i+1}$, $a_{n+2i+2}$}
    \Biggr\},
\end{equation*}
which is an infinite cover of $\moduli_{g, n+2b}$.
To say that $\gamma_i$ encloses marked points $a_{n+2i+1}$ and $a_{n+2i+2}$ means that $C \setminus \gamma_i$ is the disjoint union of two surfaces, one of which is homeomorphic to a sphere with one boundary and two cusps, the cusps being labeled by $a_{n+2i+1}$ and $a_{n+2i+2}$.
This type of cover was first considered by Mirzakhani \cite{Mirzakhani:2007zt, Mirzakhani:2007kc} when calculating the Weil-Petersson volumes of the moduli space of bordered hyperbolic surfaces.
Some important observations about $\moduli^{\vec{\gamma}}_{g, n+2b}$ include the following:
\begin{itemize}
    \item It is a symplectic manifold (via the Weil-Petersson form).

    \item It has a hamiltonian $b$-torus action given by performing Fenchel-Nielsen twists (c.f. \cite{MR590044}) around the curves $\gamma_1, \ldots, \gamma_b$.
        This corresponds with rotating the marked points on $\torusOpenModuli_{(g,b), n}$.

    \item The moment map is given by the $b$-tuple of squares of lengths of the curves $\vec{\gamma}$, i.e. 
        $\mu(C, \vec{\gamma}) = \bigl(\ell^2(\gamma_1)/2, \ldots, \ell^2(\gamma_b)/2 \bigr)$,
        or equivalently the squared lengths of the geodesic boundaries in
        $\torusOpenModuli_{(g, b), n}$.

    \item The symplectic quotient at level set $\vec{y}$, i.e. 
        $\mu^{-1}(\vec{y}) / (S^1)^b$, is the space $\coverOpenModuli_{(g, b), n}(\vec{y})$, which consists of open surfaces where the boundaries are labeled and of specified lengths $y_1, \ldots, y_b$.

    \item The scaling limit argument for the Weil-Petersson form can be easily adapted to these symplectic quotients.
        Hence the form $\openVol_{(g, b), n}(\vec{x}; \vec{y})dy_1\cdots dy_b$ is the Duistermaat-Heckman \cite{MR674406} measure for the above Hamiltonian torus action.
\end{itemize}
Putting this all together, we have proven
\begin{theorem}
    $W_{(g,b), n}(\vec{\lambda})$ is the Laplace transform (w.r.t. variables $\vec{x}$) of the Duistermaat-Heckman volume obtained by integrating the form
    \begin{equation*}
        \frac{1}{d!}\left(
            \sum_{i=1}^{n} \frac{x_i^2}{2}\psi_i + \sum_{j=1}^{b}
            \frac{y_j^2}{2}\psi_{j+n}
        \right)^d d\vec{y}
    \end{equation*}
    over $\compactOpenModuli_{(g, b), b}(\vec{x})\subset \Mbar_{g,b+n} / S_b$.
\end{theorem}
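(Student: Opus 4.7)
The plan is to assemble the theorem from three inputs: the definition of $W_{(g,b),n}$, the preceding intersection-theoretic identity for $\openVol_{(g,b),n}$, and the Hamiltonian $(S^1)^b$-action on $\torusOpenModuli_{(g,b),n}$. First I would unwind the definition by Fubini,
\begin{equation*}
    W_{(g,b),n}(\vec{\lambda}) = \int_{\RR_+^n} e^{-\vec{\lambda}\cdot\vec{x}} \left( \int_{\RR_+^b} \openVol_{(g,b),n}(\vec{x};\vec{y})\, d\vec{y}\right) d\vec{x},
\end{equation*}
so the content of the theorem reduces to identifying the inner $\vec{y}$-integral with a Duistermaat--Heckman volume on $\compactOpenModuli_{(g,b),n}(\vec{x})$.

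Next I would set up the Hamiltonian picture using the bullet points preceding the theorem. Via the capping-off isomorphism $\torusOpenModuli_{(g,b),n} \cong \moduli^{\vec{\gamma}}_{g,n+2b}$, the Weil--Petersson form supplies a symplectic structure of complex dimension $D = d+b = 3g-3+n+2b$, Fenchel--Nielsen twists around $\vec{\gamma}$ generate a Hamiltonian $(S^1)^b$-action, and the moment map $\mu = \bigl(\ell^2(\gamma_i)/2\bigr)_i$ exhibits $\coverOpenModuli_{(g,b),n}(\vec{y})$ as the symplectic reduction $\mu^{-1}(\vec{y})/(S^1)^b$. The classical Duistermaat--Heckman theorem then asserts that the pushforward of the Liouville measure $\omega_{WP}^D/D!$ under $\mu$ has density in $\vec{y}$ equal to the reduced Weil--Petersson volume of $\mu^{-1}(\vec{y})/(S^1)^b$.

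With this structural setup in place, I would then invoke the scaling-limit argument already used for the preceding theorem, now applied fiberwise to the family of symplectic quotients. After pullback by $f_{t\vec{L}}$ and the limit $t\to\infty$, the rescaled Weil--Petersson form on each reduced space converges to Kontsevich's form $\Omega$ restricted to $\CoverOpenRG_{(g,b),n}(\vec{x};\vec{y})$, and by Mirzakhani's formula it represents the class $\sum (x_i^2/2)\psi_i + \sum (y_j^2/2)\psi_{j+n}$. Hence the reduced Weil--Petersson volume coincides with $b!\cdot\openVol_{(g,b),n}(\vec{x};\vec{y})$; quotienting by the $S_b$-action that labels boundary components and integrating over $\vec{y}$ produces precisely the Duistermaat--Heckman volume on $\compactOpenModuli_{(g,b),n}(\vec{x}) \subset \Mbar_{g,b+n}/S_b$, and the Laplace transform in $\vec{x}$ recovers $W_{(g,b),n}(\vec{\lambda})$.

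The main obstacle I anticipate is making the Duistermaat--Heckman identification rigorous on the compactified orbifold: the scaling limit is only pointwise on the top-dimensional trivalent locus, so one must verify that $\vec{y}$-integration commutes with the $t\to\infty$ limit, that the degenerate level sets of $\mu$ (where boundary geodesics collapse or torus orbits lose dimension at the combinatorial boundary) contribute measure zero, and that the pushed-forward measure extends across these strata to give the claimed cohomological integral on $\compactOpenModuli_{(g,b),n}(\vec{x})$. Modulo these limit and compactification subtleties, the theorem follows directly from the preceding fiberwise intersection formula combined with the standard Hamiltonian package on the capped cover.
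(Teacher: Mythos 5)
Your proposal takes essentially the same route as the paper: the capped cover $\moduli^{\vec{\gamma}}_{g,n+2b}$ with its Hamiltonian Fenchel--Nielsen torus action and moment map $\bigl(\ell^2(\gamma_i)/2\bigr)_i$, the identification of $\coverOpenModuli_{(g,b),n}(\vec{y})$ as the symplectic reduction, the fiberwise scaling limit turning the reduced Weil--Petersson volume into $b!\,\openVol_{(g,b),n}(\vec{x};\vec{y})$, and finally the $S_b$-quotient, $\vec{y}$-integration, and Laplace transform in $\vec{x}$. The paper assembles exactly these ingredients (as its bullet list) and declares the theorem proven; the convergence and compactification subtleties you flag at the end are left implicit there as well.
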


This completes the argument that our integral formulas are geometrically relevant, and correspond with an intersection theory calculation on the moduli space of open Riemann surfaces.


\bibliographystyle{plain}
\bibliography{references}

\end{document}